\DeclareMathAlphabet{\mathbbold}{U}{bbold}{m}{n}
\theoremstyle{plain}
        \newtheorem{theorem}{Theorem}[section]
        \newtheorem*{theorem*}{Theorem}
        \newtheorem*{conj*}{Conjecture}
        \newtheorem{lemma}[theorem]{Lemma}
        \newtheorem{prop}[theorem]{Proposition}
\theoremstyle{definition}
          \newtheorem{claim}[theorem]{Claim}
\theoremstyle{remark}
\numberwithin{equation}{section}
\numberwithin{theorem}{section}
\numberwithin{table}{section}
\numberwithin{figure}{section}
\newcommand{\R}{\mathbb{R}}
\newcommand{\N}{\mathbb{N}}
\def\({\left(}
\def\({\right)}
\def\[{\left[}
\def\]{\right]}
\newcommand{\vertiii}[1]{{\left\vert\kern-0.25ex\left\vert\kern-0.25ex\left\vert #1
    \right\vert\kern-0.25ex\right\vert\kern-0.25ex\right\vert}}
\DeclareMathOperator{\Ima}{Im}
\begin{document}
\title[Graphs of continuous functions never be self-similar]{Graphs of continuous but non-affine functions are never self-similar}
\author{Carlos Gustavo Moreira, Jinghua Xi \and Yiwei~Zhang}
\thanks{We would like to thank Lifeng Xi and Kan Jiang for posing the problem that inspired the current research. We would also like to thank Ra\'{u}l Ures, Pedro Salom\~{a}o, Tongyao Yu, as well as the anonymous referee(s) for valuable comments. C. G. Moreira was partially supported by CNPq and FRPERJ. Y. Zhang was partially supported by NSFC Nos. 12161141002, 12271432, and Guangdong Basic and Applied Basic Research Foundation No. 2024A1515010974.}


\subjclass[2020]{28A80}


\keywords{self-similar sets, graphs of continuous functions, affine functions}

\begin{abstract}

Bandt and Kravchenko \cite{BandtKravchenko2010} proved that if a self-similar set spans $\R^m$, then there is no tangent hyperplane at any point of the set. In particular, this indicates that a smooth planar curve is self-similar if and only if it is a straight line. When restricting curves to graphs of continuous functions, we can show that the graph of a continuous function is self-similar if and only if the graph is a straight line, i.e., the underlying function is affine.

\end{abstract}

\maketitle


\section{Introduction}\label{sec:Introduction}


A map $S:\mathbb{R}^{m}\to \mathbb{R}^{m}$ is said to be a \emph{(contracting) similitude} (e.g., \cite{Falconer1985}) if $S(x)=rOx+b$, where $r\in(0,1)$, $b\in\mathbb{R}^{m}$, and $O$ is an orthogonal matrix.
A compact set $K\subset\mathbb{R}^{m}$ is \emph{self-similar} if there are similitudes $\{S_{i}\}_{i=1}^{k}$, such that
\begin{equation}\label{equ:selfsimilarset}
    K=\bigcup_{i=1}^{k}S_{i}(K).
\end{equation}

The structure of a self-similar set becomes relatively well-understood when it satisfies the open set condition (\textbf{OSC}) introduced by Hutchinson in \cite{Hutchinson1981}. Here, the OSC is satisfied if there exists a nonempty open set $V\subset \R^m$ such that
$\bigcup_{i=1}^{k}S_i(V)\subset V \text{ and } S_i(V)\cap S_j(V)=\emptyset \text{ for }i\neq j.$
In this case, $\dim_{\rm{H}}K=s$, where $dim_H$ is the Hausdorff dimension and $s$ is the unique solution of $\sum_{i=1}^{k}r_{i}^{s}=1$, with $r_i$ being the ratio of $S_i$. Another characteristic of the OSC can be found in \cite{BandtGraf2010} by Bandt and Graf.


It is widely acknowledged that fractals are inherently non-smooth. Yet, there has been limited exploration into geometric objects that exhibit both self-similarity and smoothness. However, Bandt and colleagues have been pioneers in this field, achieving significant results. For instance, Bandt and Mubarak \cite{BandtMubarak2004} established that any differentiable subcurve of the classical Sierpinski carpet must be a line segment. It is worth pointing out that Bandt and Kravchenko \cite{BandtKravchenko2010} demonstrated that a self-similar set spanning $\R^m$ cannot possess a tangent hyperplane at any point within the set, a finding with broad applications. For example, it suggests that a self-similar planar curve can only be a straight line if differentiable at some point.

In the present study, we are concerned with a special class of curves: the graphs of continuous functions, i.e., $$G\coloneqq\{(x,y)\in\R^2:y=f(x),x\in I\},$$where $f$ is a continuous function on a compact interval $I$. Many continuous but nowhere differentiable functions exhibit high ``self-similarity" in their graphs. A notable example is Takagi's function (e.g., see \cite{Teiji1901}), as illustrated in Figure \ref{fig:takagi}. However, because these functions are not smooth, it's hard to tell if they are self-similar by using existing results on smooth self-similar sets given by Bandt and Kravchenko \cite{BandtKravchenko2010}. This complexity has spurred our interest in exploring alternative approaches to address such questions.

\begin{figure}[h]
    \centering
    \includegraphics[width=0.55\textwidth]{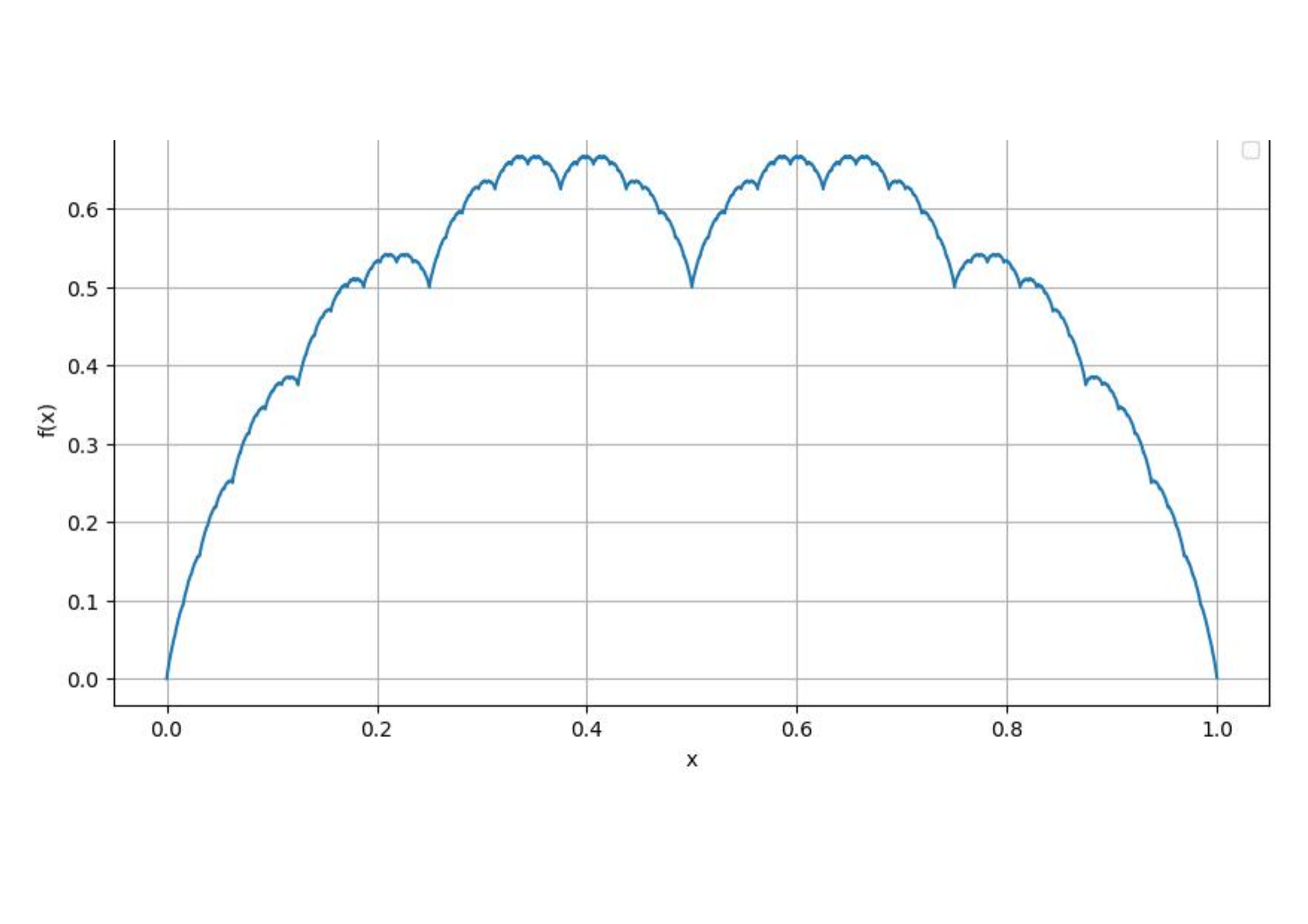}
    \caption{The graph of Takagi's function}
    \label{fig:takagi}
\end{figure}

In this work, we are concerned with which geometric shapes can be realized as a self-similar set. In particular, we propose the following problem for planar graphs associated with one real variable continuous functions:

\begin{center}
    \emph{When is the planar graph of a continuous function self-similar?}
\end{center}
This problem originates from a question proposed by the second named author and Jiang: Can the graph of a $C^{1}$ and non-affine function $f$ on a compact interval be self-similar? This is discussed in \cite[Question 3.1]{XiJiang2023}. In addition, Bandt and Kravchenko's research reveals the absence of tangent spaces in self-similar curves (see \cite[Theorem 1]{BandtKravchenko2010}), suggesting the $C^1$ regularity seems to be overly stringent for the graphs of such functions.





In this article, we are aiming to give an affirmative and comprehensive answer to the above question. To be more precise, our main result is stated as follows.

\begin{theorem}\label{thm:maintheormrigidity}
  Let $I$ be a compact interval, $f:I\to\mathbb{R}$ be a continuous function and $G=\{(x,f(x)):x\in I\}$. Then the following two statements are equivalent:
  \begin{itemize}
    \item $G$ is a self-similar set;
    \item The underlying function $f(x)=ax+b$ for some $a,b\in \R$. In other words, $f$ is an affine function on $I$.
  \end{itemize}
\end{theorem}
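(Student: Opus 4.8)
The plan is to prove the nontrivial implication, that self-similarity forces $f$ to be affine; the converse is immediate, since the graph of $f(x)=ax+b$ is a line segment, which is trivially self-similar (subdivide it into finitely many scaled copies of itself). So suppose $G=\bigcup_{i=1}^k S_i(G)$ with $S_i=r_iO_i+b_i$, $r_i\in(0,1)$ and $O_i$ orthogonal. Since $t\mapsto(t,f(t))$ is a continuous injection of the compact interval $I$, the set $G$ is an arc whose projection to the $x$-axis is a homeomorphism onto $I$; in particular $G$ is $x$-monotone and contains no vertical chord, and each $S_i(G)\subseteq G$ is itself an $x$-monotone sub-arc. Assume for contradiction that $f$ is not affine. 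Then $G$ is contained in no line, so its affine hull is all of $\R^2$, and the theorem of Bandt and Kravchenko \cite{BandtKravchenko2010} applies: $G$ has no tangent line at any of its points.

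The first main step is to use B--K to reduce to a single rigid configuration. The set $J\subseteq\R$ of chord slopes $\frac{f(t_2)-f(t_1)}{t_2-t_1}$ is an interval, being the continuous image of the connected set $\{t_1<t_2\}$. If $J$ were bounded below by some $a$ (or above), then $f(x)-ax$ would be monotone, hence differentiable almost everywhere, so $G$ would possess a non-vertical tangent line at a.e.\ point, contradicting B--K. Hence $J=\R$, i.e.\ the set of chord directions is $D(G)=S^1\setminus\{\pm(0,1)\}$. Next, for $P,Q\in S_i(G)$ the direction of $P-Q$ equals $O_i$ applied to a chord direction of $G$; since $S_i(G)\subseteq G$ this gives $O_i(D(G))\subseteq D(G)$, and because $O_i$ is a bijection of $S^1$ this forces $O_i(\{\pm(0,1)\})=\{\pm(0,1)\}$. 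Thus every $O_i$ preserves the vertical axis, hence is diagonal: $O_i\in\{\pm I,\ \mathrm{diag}(1,-1),\ \mathrm{diag}(-1,1)\}$. In particular no genuine rotation can occur, and each $S_i$ carries the graph to a sub-graph $x$-monotonically.

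The second main step is an extremal argument exploiting isotropy. Writing $\ell$ for the chord line of $G$ and $g=f-\ell$ (so $g$ vanishes at the two endpoints of $I$), the fact that $S_i(G)\subseteq G$ is a graph over a subinterval $I_i\subseteq I$, together with $O_i$ diagonal and \emph{isotropic} (the same factor $r_i$ in both coordinates because $S_i$ is a similitude), yields on each $I_i$ a self-affine relation $g(X)=\varepsilon_i r_i\,g(\psi_i(X))+c_i$ with $\varepsilon_i=\pm1$ and $\psi_i\colon I_i\to I$ affine. The equality of the horizontal and vertical scalings is essential here: for a genuinely anisotropic map (the self-affine Takagi situation) this clean relation fails, which is exactly why such graphs are only self-affine and not self-similar. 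Evaluating at the endpoints of $I_i$ shows $g$ takes the constant value $c_i$ there; since the $I_i$ form a connected closed cover of $I$, the boundary value $0$ propagates from the two extreme pieces through the whole chain, forcing every $c_i=0$. Then $\|g\|_\infty=\max_i r_i\|g\|_\infty$ with $\max_i r_i<1$ gives $g\equiv0$, so $f=\ell$ is affine, contradicting $J=\R$ and completing the argument.

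The hardest points are concentrated in the last step, and I regard them as the main obstacle. First, when $O_i$ is one of the two pure reflections $\mathrm{diag}(\pm1,\mp1)$ it sends $\ell$ to a line of the opposite slope, so the deviation from $\ell$ no longer transforms cleanly; I expect to dispatch this either by showing that the presence of such a reflection forces the chord to be horizontal, or by replacing $g$ with a reflection-symmetric nonlinearity functional. Second, absent an open set condition the covering intervals $I_i$ may overlap, so the end-to-end propagation of the boundary data must be upgraded to a consistency argument using that $G$ is a genuine function on the overlaps. By contrast, the reduction in the first step is the clean part, with B--K used precisely to eliminate the monotone and one-sided-slope possibilities and thereby all non-axis-aligned orthogonal parts.
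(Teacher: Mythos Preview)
Your first step is correct and takes a genuinely different route from the paper. The paper (Proposition~3.1) pins down the rotations by looking at the direction map $\Phi(p)=(p-p^*)/\|p-p^*\|$ from the fixed point $p^*$ of a single $S_i$ and showing its image (at most two arcs in $S^1$, avoiding the poles $\pm(0,1)$) is forward-invariant under $\rho_{\theta_i}$; this is self-contained and never invokes Bandt--Kravchenko. Your argument instead uses B--K to force the chord-slope set to be all of $\R$, and then reads off $O_i(\{\pm(0,1)\})=\{\pm(0,1)\}$ from $O_i(D(G))\subseteq D(G)$. Both reach the conclusion that $O_i$ is diagonal; yours is slicker but imports B--K as a black box, while the paper's is elementary.

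Your second step, however, has a genuine gap that the fixes you sketch do not close. The relation $g(X)=\varepsilon_i r_i\,g(\psi_i(X))+c_i$ does give $g=c_i$ at both endpoints of $I_i$, and the piece with left endpoint $0$ has $c_i=0$. But ``propagation through the chain'' needs the right endpoint of one $I_i$ to be an \emph{endpoint} of the next $I_j$. With overlaps (and absent OSC they do overlap) it is typically an interior point $p$ of $I_j$, and all you learn there is $0=g(p)=\varepsilon_j r_j\,g(\psi_j(p))+c_j$, which does not force $c_j=0$ since $\psi_j(p)$ is an arbitrary interior point of $I$. ``Consistency on overlaps'' only says both functional equations hold simultaneously, not that the constants agree. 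So the step ``every $c_i=0$'' is unjustified, and without it the contraction $\|g\|_\infty=\max_i r_i\|g\|_\infty$ is unavailable.

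There is a short and complete repair already implicit in your setup. Once every $O_i$ is diagonal, the framing rectangle $R$ of $G$ is sent by each $S_\alpha$ to an axis-aligned rectangle of width $|I_\alpha|=r_\alpha$ and height $r_\alpha\,\omega_f$. The paper's elementary covering argument (its Proposition~3.3: extract a minimal subcover of $[x,y]$ by the $I_\alpha$ and sum heights) then yields $|f(y)-f(x)|\le 4\omega_f|y-x|$, so $f$ is Lipschitz. But that immediately contradicts your own Step~1 conclusion $J=\R$, since Lipschitz bounds all chord slopes. Hence $f$ is affine. In other words, your use of B--K makes the paper's entire Step~3 (the Cantor-like construction via Proposition~3.4) unnecessary: B--K plus the Lipschitz covering lemma already closes the loop. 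What the paper gains by avoiding B--K is self-containment; what your route gains, once patched this way, is brevity.
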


It is worth remarking that due to the existence of continuous self-similar planar curves, none of the hypotheses in Theorem \ref{thm:maintheormrigidity} can be weakened. Moreover, Theorem \ref{thm:maintheormrigidity} strengthens \cite[Corollary 1.6]{XiJiang2023} by the second named author and Jiang.
According to Theorem \ref{thm:maintheormrigidity}, graphs of continuous functions that are non-affine are not self-similar. In particular, the graphs of Weierstrass's function, Takagi's function, Cantor-Lebesgue's function, etc., are not self-similar.

The paper is organized as follows. In Section 2, we recall some notations and review some preliminaries. In Section 3, we prove our main Theorem \ref{thm:maintheormrigidity}. In subsection \ref{sec:3.1}, we explain the strategy of the proof. The proof consists of three steps: occupy subsections \ref{sec:slopemap}, \ref{sec:vitalicoverargument} and \ref{sec:bandtkravchenko'sresult} respectively.

\section{Notation and Preliminaries}\label{sec:preliminaries}


For $A$ is a subset of $B$, we denote by $A^c$ the complement of $A$ in $B$, assuming $B$ is evident from the context.
We use the convention that $\mathbb{N}\coloneqq\{1,2,3,\dots\}$, $\N_0\coloneqq\{0\}\cup \N$, $\mathbb{Z}$ the set of integers, and $\mathbb{Q}$ (resp. $\mathbb{Q}^{c}$) the set of all rational (resp. irrational) numbers.
For a compact interval $I$, the length of $I$ is denoted by $|I|$. For each $k\in\N$, define $[k]:=\{1,\dots,k\}$. For each integer $n\in\N$, define
\begin{equation*}\label{equ:kl}
[k]^n\coloneqq\{(x_1,\dots,x_l):x_i\in [k],\ i=1,\dots,n\}.
\end{equation*}

For a continuous function $f$ on a compact interval $I$, the \emph{graph} of $f$, denoted as $G$, is defined by $$G\coloneqq\{(x,f(x))\in \R^2:x\in I\}.$$




Next, let us introduce some notations for self-similar sets on $\R^2$.
We say a map $S:\mathbb{R}^{2}\to \mathbb{R}^{2}$ is a \emph{(contracting) similitude} if $S(x)=r\rho_\theta x+b$ with $r\in(0,1),b\in\mathbb{R}^{2}$ and $\rho_\theta=\begin{pmatrix}
    \cos\theta&\sin\theta \\ -\sin\theta &\cos\theta
\end{pmatrix}$. 
For each planar self-similar set, \eqref{equ:selfsimilarset} reduces to
\begin{equation*}\label{equ:planalsss}
G=\bigcup_{i=1}^{k}S_{i}(G),
\end{equation*}
where $S_i(v):=r_i\rho_{\theta_i}(v)+b_i$. Moreover, for $n\in\mathbb{N}$ and $\alpha=(i_1,\dots,i_n)\in[k]^n$, put
\begin{equation*}\label{equ:sralpha}
S_\alpha\coloneqq S_{i_n}\circ \cdots \circ S_{i_1}~~ \text{and}~~r_\alpha\coloneqq r_{i_n}\cdots r_{i_1}.
\end{equation*}

In addition, denote by $S^1\coloneqq\{(x,y)\in\R^2:x^2+y^2=1\}$ the unit circle centred at the origin, endowed with the circle metric. Denote by $\mathcal{N}$ and $\mathcal{S}$ the points $(0,1)$ and $(0,-1)$ on $S^1$, respectively. We say each connected open subset in $S^{1}$ an \emph{arc}.

By a well-known result on the minimality for irrational rotation on $S^1$ (see e.g., \cite[Theorem 5.8]{Walters82}), we immediately have


\begin{lemma}\label{cor:fullcircle}
    Suppose $J$ is an arc in $S^1$ and $\frac{\theta}{2\pi}\in\mathbb{Q}^c$, then
    \begin{equation}\label{equ:cover}
    \bigcup\limits_{i=0}^\infty \rho_\theta^i (J)=S^1.
    \end{equation}
\end{lemma}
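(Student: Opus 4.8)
The plan is to deduce the statement directly from the minimality of irrational rotations quoted just above, but applied to the \emph{inverse} rotation $\rho_\theta^{-1}=\rho_{-\theta}$. The key observation is that the union in \eqref{equ:cover} consists of \emph{forward} images of the arc $J$, so that the membership $q\in\rho_\theta^i(J)$ is equivalent to $\rho_\theta^{-i}(q)\in J$. This reformulation turns the problem of covering $S^1$ by the translates $\rho_\theta^i(J)$ into the problem of hitting the single open set $J$ with one orbit run backwards, where minimality applies cleanly.

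First I would record that since $\tfrac{\theta}{2\pi}\in\mathbb{Q}^c$, we also have $\tfrac{-\theta}{2\pi}\in\mathbb{Q}^c$, so $\rho_\theta^{-1}$ is again an irrational rotation of $S^1$. By the cited minimality result (\cite[Theorem 5.8]{Walters82}), every forward orbit of $\rho_\theta^{-1}$ is dense in $S^1$; equivalently, every backward $\rho_\theta$-orbit is dense.

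Next, I would fix an arbitrary point $q\in S^1$ and consider the sequence $q_i\coloneqq\rho_\theta^{-i}(q)$ for $i\in\N_0$, which is exactly the forward $\rho_\theta^{-1}$-orbit of $q$ and is therefore dense in $S^1$. Since $J$ is, by definition, a nonempty connected \emph{open} subset of $S^1$, density forces $q_i\in J$ for some $i\ge 0$. Applying $\rho_\theta^i$ then yields $q=\rho_\theta^i(q_i)\in\rho_\theta^i(J)\subset\bigcup_{i=0}^\infty\rho_\theta^i(J)$. As $q$ was arbitrary and the reverse inclusion $\bigcup_{i=0}^\infty\rho_\theta^i(J)\subset S^1$ is trivial, this establishes \eqref{equ:cover}.

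There is essentially no serious obstacle here; the only point requiring care is the direction of the orbit. A naive attempt using the forward orbit of a single point of $J$ only shows that one orbit is dense, which does not by itself imply that the translates of $J$ cover $S^1$. Passing to the inverse rotation (equivalently, using that backward orbits are dense) is precisely what lets the \emph{openness} of $J$ do the work. One could alternatively invoke the compactness of $S^1$ to extract a finite subcover, but for the infinite union stated this is unnecessary.
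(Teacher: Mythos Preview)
Your proof is correct and follows exactly the route the paper indicates: the paper does not give a proof at all but simply states that the lemma is an immediate consequence of the minimality of irrational rotations \cite[Theorem~5.8]{Walters82}. Your argument unpacks this in full, including the careful point about passing to $\rho_\theta^{-1}$ so that openness of $J$ can be used; this is precisely the standard deduction the paper is alluding to.
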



\section{Proof of Theorem \ref{thm:maintheormrigidity}}\label{sec:3}
\subsection{Strategy of the proof.}\label{sec:3.1}

Let's briefly outline the strategy of the proof of Theorem \ref{thm:maintheormrigidity}.
The principal obstacle is addressing the function's lack of differentiability. Our argument steers clear of methods reliant on curvature or other differentiable mechanisms, differing significantly from \cite[Theorem 1.5]{XiJiang2023}.

Our proof can be summarized in the following three key steps:

\begin{enumerate}
    \item[Step 1:] We demonstrate that the only possible rotation $\rho_{\theta_i}$  associated with similitude $S_i(v)=r_i\rho_{\theta_i}(v)+b_i$ is either identity or reflection, i.e., $\theta_i=0$ or $\pi$.


    \item[Step 2:] We show that the underlying function $f$ of $G$ is Lipschitz continuous.


    \item[Step 3:] We prove that $f$ must be affine, i.e., $f(x)=kx+b$ for some $k,b\in \R$.

\end{enumerate}

  The proof of Theorem \ref{thm:maintheormrigidity} is divided into three subsections (Subsections \ref{sec:slopemap}, \ref{sec:vitalicoverargument}, and \ref{sec:bandtkravchenko'sresult}), each subsection corresponds to the steps outlined above. The complete proof of Theorem \ref{thm:maintheormrigidity} is provided at the end of Subsection \ref{sec:bandtkravchenko'sresult}. Without loss of generality, we reduce the hypothesis of Theorem \ref{thm:maintheormrigidity} to the interval $[0,1]$.

\subsection{Possible Rotations for Similitudes}\label{sec:slopemap}


In this subsection, we transform the dynamics of a similitude around its fixed point into the dynamics of a set of directions under rotation.

This is achieved by constructing an auxiliary function $\Phi$, which maps points on the self-similar graph $G$ to directions relative to a unique fixed point. By analyzing the behaviour of the image of $\Phi$ under the rotation $\rho_\theta$, we deduce that there are only two possible rotations that can occur within the self-similar graph $G$, as stated in Proposition \ref{prop:rigidrotation}.

\begin{prop}\label{prop:rigidrotation}
    Let $G$ be the graph of a continuous function $f$ on $[0,1]$, and $S(x)=r\rho_\theta(x)+b$ be a strict contracting similitude, with $r\in(0,1),b\in\mathbb{R}^{2}$, and $\rho_{\theta}$ is the rotation matrix. If $S(G)\subset G$, then $\rho_\theta=\begin{pmatrix}
        1&0\\0&1
    \end{pmatrix}$ or
    $\begin{pmatrix}
        -1&0\\0&-1
    \end{pmatrix}$.
\end{prop}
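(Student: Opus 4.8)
The plan is to linearize $S$ at its unique fixed point and transfer the inclusion $S(G)\subseteq G$ to a forward-invariance statement for a set of \emph{directions} on $S^1$, where the graph condition becomes the prohibition of the two vertical directions $\mathcal{N},\mathcal{S}$. First I would record the fixed point. As $r\in(0,1)$, the map $S$ is a strict contraction of $\R^2$ and has a unique fixed point $p$; since $G$ is compact, $S(G)\subseteq G$, and $S^n(q)\to p$ for any $q\in G$, we get $p\in G$. Writing $b=p-r\rho_\theta p$ yields the clean form $S(u)-p=r\rho_\theta(u-p)$. I then define the direction map $\Phi(u)=(u-p)/\lvert u-p\rvert$ on $\R^2\setminus\{p\}$. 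Because $\rho_\theta$ is orthogonal it commutes with normalization, so $\Phi(S(u))=\rho_\theta\Phi(u)$; setting $D=\Phi(G\setminus\{p\})$ and using $S(G\setminus\{p\})\subseteq G\setminus\{p\}$ gives the key relation $\rho_\theta(D)\subseteq D$. Moreover, since $G$ is a graph no two distinct points share an $x$-coordinate, so $\Phi$ never points straight up or down; hence $D\subseteq S^1\setminus\{\mathcal{N},\mathcal{S}\}$.

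Next I would pin down the shape of $D$. The graph $G$ is homeomorphic to $[0,1]$, so $G\setminus\{p\}$ has at most two connected components, and, $\Phi$ being continuous, $D$ is a union of at most two connected subsets of $S^1$, each either a single point or an arc. This splits the argument into a \emph{degenerate} case, where $f$ is affine on each side of $x_p$ and $D$ consists of at most two points, and a \emph{non-degenerate} case, where $D$ contains a nondegenerate open arc $J$.

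For irrational $\theta/2\pi$ in the non-degenerate case the conclusion is immediate: $\rho_\theta^{\,i}(J)\subseteq D$ for every $i$, so Lemma~\ref{cor:fullcircle} gives $S^1=\bigcup_{i\ge 0}\rho_\theta^{\,i}(J)\subseteq D$, contradicting $\mathcal{N}\notin D$. For rational $\theta/2\pi$ the finite order of $\rho_\theta$ upgrades the inclusion to the equality $\rho_\theta(D)=D$, so $\rho_\theta$ permutes the at most two connected components of $D$. If $\theta\neq 0,\pi$ the rotation has order $n\ge 3$, whence the orbit of a nondegenerate component has size at most two, forcing a nontrivial power ($\rho_\theta$ or $\rho_\theta^{\,2}$) to fix a proper nondegenerate arc setwise, which is impossible for a nonzero rotation. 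This rules out $\theta\neq 0,\pi$ in the non-degenerate case. Finally, in the degenerate case $\rho_\theta$ permutes the at most two direction-points: either it fixes them, giving $\theta=0$, or it swaps two directions, which forces $\rho_\theta^{\,2}$ to fix each, hence $\theta=\pi$, the two directions antipodal, and $f$ a single affine function. In every case $\theta\in\{0,\pi\}$, i.e. $\rho_\theta=\pm\id$.

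The main obstacle I anticipate is the rational case, where the covering Lemma~\ref{cor:fullcircle} is unavailable. There the essential point is to extract enough rigidity from the graph's connectivity, namely that $D$ is a union of at most two arcs, and to combine it with the elementary fact that a nontrivial rotation cannot fix a proper connected subset of $S^1$. Checking carefully that the degenerate case coincides exactly with $f$ being affine on each side of $x_p$, and that the swap subcase genuinely forces $f$ to be a single line (so that $\theta=\pi$ is realized only by a line segment through $p$), is the other point that demands attention.
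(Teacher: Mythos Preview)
Your proof is correct and follows the same architecture as the paper: fix the unique fixed point $p^*$, push $G\setminus\{p^*\}$ to $S^1$ via the direction map $\Phi$, record the forward invariance $\rho_\theta(\operatorname{Im}\Phi)\subseteq\operatorname{Im}\Phi$ together with $\mathcal{N},\mathcal{S}\notin\operatorname{Im}\Phi$, use that $\operatorname{Im}\Phi$ has at most two connected components, and handle the irrational case by Lemma~\ref{cor:fullcircle}.

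The one genuine point of departure is the rational case with order $n\ge 3$. The paper argues via the \emph{complement}: backward invariance forces all the rotates $\rho_{-\theta}^{\,i}(\mathcal{N}),\rho_{-\theta}^{\,i}(\mathcal{S})$ to lie outside $\operatorname{Im}\Phi$, and these $2n$ (or $n$) punctures cut $S^1$ into segments so that the $n$ rotates of an arc $J\subseteq\operatorname{Im}\Phi$ land in $n$ distinct segments, forcing more than two components. You instead use finite order to upgrade $\rho_\theta(D)\subseteq D$ to $\rho_\theta(D)=D$, observe that $\rho_\theta$ then permutes the $\le 2$ components, and conclude that $\rho_\theta$ or $\rho_\theta^{\,2}$ fixes a proper arc setwise, which no nontrivial rotation can do. Both arguments are short and correct; yours is a touch cleaner because it avoids the bookkeeping of counting segments and the odd/even parity of $n$. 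Your explicit treatment of the degenerate case (where $D$ is one or two points) is also slightly more careful than the paper's Lemma~\ref{lem:containarc}, which tacitly assumes $\rho_\theta\neq\id$ when asserting the two points must be antipodal.
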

Since $S$ is a (strict) contraction mapping from $G$ to itself, it follows from the Banach contraction principle, similitude $S$ has a unique fixed point in $G$, which we denote by $p^*$.

Next, define a function
\begin{equation*}\label{equ:phi}
\Phi: G \setminus \{p^*\} \to S^1, \quad p \mapsto \frac{p-p^*}{\|p-p^*\|}.
\end{equation*}
Given that $G$ is the graph of a function, this ensures that the image of $\Phi$ lies in the unit circle $S^1$, excluding the points $\mathcal{N}$ and $\mathcal{S}$, i.e.,
\begin{equation}\label{completementofimage}
    \Ima(\Phi) \subset S^1 \setminus \{\mathcal{N}, \mathcal{S}\}.
\end{equation}

Recall that the points $\mathcal{N}$ and $\mathcal{S}$ were given in Section \ref{sec:preliminaries}. Moreover, by the definitions of $S$ and $\Phi$ and the hypothesis that $S(G)\subset G$, we then have
\begin{equation*}\label{equ:composition}
\Phi(S(p)) = \frac{S(p)-p^*}{\|S(p)-p^*\|} = \frac{S(p)-S(p^*)}{\|S(p)-S(p^*)\|} = \frac{\rho_\theta(p-p^*)}{\|p-p^*\|},~~\text{for every } p\in G\backslash\{p^{
*}\}.
\end{equation*}
This means $\Phi(S(p)) = \rho_\theta(\Phi(p))$. It then yields that $\Ima \Phi$ is forward invariant under the rotation $\rho_\theta$, i,e.,

\begin{equation}\label{equ:forward}
    \rho_{\theta}(\Ima\Phi)\subset \Ima\Phi.
\end{equation}

Since $G \setminus \{p^*\}$ has at most two connected components, it is worth mentioning that the continuity of $\Phi$ implies that $\Ima \Phi$ also has at most two connected components.

\begin{lemma}\label{lem:containarc}
    If $G$ is not a straight line, then $\Ima \Phi$ contains an arc in $S^{1}$.
\end{lemma}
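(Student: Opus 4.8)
The plan is to argue by contraposition: assuming $\Ima\Phi$ contains no arc, I will show that $f$ is affine, so that $G$ is a straight line. The starting point is purely topological. The parametrization $x\mapsto(x,f(x))$ is a homeomorphism of $[0,1]$ onto $G$, so writing $p^*=(x^*,f(x^*))$, the set $G\setminus\{p^*\}$ has at most two connected components, namely the images of $[0,x^*)$ and of $(x^*,1]$. Since $\Phi$ is continuous on $G\setminus\{p^*\}$ (there the denominator $\|p-p^*\|$ never vanishes), it sends each component to a connected subset of $S^1$. By \eqref{completementofimage} these images avoid $\mathcal N$ and $\mathcal S$; more precisely, the image of the right component ($x>x^*$) lies in the open right half $\{(u,v)\in S^1:u>0\}$ and that of the left component in the open left half, each of which is homeomorphic to an open interval.

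Now a connected subset of an interval that contains no relatively open subinterval is a single point. Hence, if $\Ima\Phi$ contains no arc, then $\Phi$ is constant on each component. Constancy of $\Phi$ on the image of $(x^*,1]$ says that every point $(x,f(x))$ with $x>x^*$ lies on a single ray issuing from $p^*$, i.e. $f(x)-f(x^*)=m_+(x-x^*)$ for a fixed slope $m_+$; thus $f$ is affine on $[x^*,1]$, and likewise affine with some slope $m_-$ on $[0,x^*]$. If $x^*\in\{0,1\}$ there is only one component and we are done, since $f$ is then affine on all of $[0,1]$; and if $x^*\in(0,1)$ with $m_-=m_+$ the two affine pieces fit together and $f$ is again affine. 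In all these cases $G$ is a straight line, contradicting the hypothesis.

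The one configuration left — and the step I expect to be the crux — is $x^*\in(0,1)$ with $m_-\neq m_+$, that is, $f$ a genuine broken line whose unique corner sits exactly at the fixed point $p^*$. Here $\Ima\Phi$ is literally the two points $d_{\pm}=\pm(1,m_{\pm})/\sqrt{1+m_{\pm}^2}$ and contains no arc, so the soft argument above stalls; indeed this configuration really can occur for a single inclusion $S(G)\subset G$ (take the symmetric ``V'' and $S$ a pure contraction toward its vertex), which shows the hypothesis $S(G)\subset G$ alone is not enough. To handle it I would bring in the extra structure already available. First, the forward invariance \eqref{equ:forward} forces $\rho_\theta$ to permute the two points $d_+,d_-$; since these are not antipodal when $m_-\neq m_+$, a rotation permuting them can neither swap them nor fix only one, so it must fix both, whence $\theta=0$ — already enough for the rotation dichotomy that this lemma feeds into.

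To salvage the literal ``contains an arc'' conclusion one must pass from a single similitude to the ambient self-similar structure $G=\bigcup_i S_i(G)$: the fixed points of the compositions $S_\alpha$ are dense in $G$, so one can choose a similitude whose fixed point is an interior point of one of the two straight arms. For that choice the corner of $G$ lies in the interior of a single component of the complement, the corresponding $\Phi$ is non-constant there, and its image is an arc after all. Making this relocation precise — so that the lone-corner case cannot persist across the whole self-similar system — is the real difficulty; everything preceding it is the routine topology of connected images on the circle.
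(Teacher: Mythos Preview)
Your contrapositive strategy and the reduction to ``$\Ima\Phi$ is one or two points'' is exactly what the paper does. Where you part company is in the two-point case: the paper simply asserts that the two points ``must be antipodal due to the invariance of $\Ima\Phi$ under $\rho_\theta$'', and from antipodality concludes $G$ is a line. You have correctly spotted that this step fails when $\theta=0$, since the identity rotation imposes no constraint. Your symmetric ``V'' with $S$ a pure homothety toward the vertex is a genuine counterexample to the lemma \emph{as stated}: $G$ is not a line, $S(G)\subset G$, yet $\Ima\Phi=\{(1,1)/\sqrt2,\,(-1,1)/\sqrt2\}$ contains no arc. So you have in fact uncovered a gap in the paper; the lemma is false in the generality claimed, and the paper's Case~2 silently assumes $\theta\neq 0$.

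Your damage assessment is also correct. If $\Ima\Phi$ consists of two points and $\rho_\theta$ permutes them, then either $\theta=0$ or $\theta=\pi$ with the points antipodal; in both cases the conclusion of Proposition~\ref{prop:rigidrotation} already holds. In the paper's proof of that proposition the arc is only used to derive contradictions for $\theta/2\pi$ irrational and for rational $\theta/2\pi=m/n$ with $n\ge 3$, never for $n\le 2$. So there is no need to chase fixed points of compositions through the full IFS to rescue the literal arc statement; the clean repair is to add the hypothesis $\theta\notin\{0,\pi\}$ to the lemma (under which the paper's antipodality argument becomes valid), or to absorb the two-point case directly into the proof of the proposition. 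Your analysis here is sharper than the paper's.
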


\begin{proof}
We will prove Lemma \ref{lem:containarc} by contraposition. Suppose $\Ima\Phi$ doesn't include any arc in $S^{1}$. We aim to demonstrate that $G$ is a straight line.

Recall that $\Ima \Phi$ has at most two connected components; under our premise, each must be a single point. We consider two cases:

\textbf{Case 1}: If $\Ima \Phi$ consists of a single point, then all directions from $p^*$ to any point in $G$ are constant, implying $G$ is a straight line.

\textbf{Case 2}: If $\Ima \Phi$ consists of two points, these must be antipodal due to the invariance of $\Ima \Phi$ under $\rho_\theta$. This implies all points in $G$ are collinear, so $G$ is again a straight line.

Consequently, in either case, if $\Ima \Phi$ does not include any arc in $S^{1}$, it follows that $G$ must be a straight line, as we wanted.

\end{proof}

With the aid of Lemma \ref{lem:containarc}, we now proceed to the proof of Proposition \ref{prop:rigidrotation}.



\begin{proof}[Proof of Proposition \ref{prop:rigidrotation}]

    In fact, if $G$ is already a straight line, then Proposition \ref{prop:rigidrotation} trivially holds. If not, we can further assume that the graph $G$ of a continuous function is not a straight line. Applying  Lemma \ref{lem:containarc}, then $\Ima \Phi$ must then contain an arc.

    Choose such an arc $J \subset \Ima \Phi$ on $S^1$. According to \eqref{equ:forward}, we have $\rho_\theta^i(J) \subset \Ima \Phi$ for any $i \in \mathbb{N}_0$. Consequently,
\begin{equation}\label{equ:finiteiteration}
    \bigcup_{i=0}^{n} \rho_\theta^i(J) \subset \Ima \Phi \quad \text{for any } n \in \mathbb{N}_0,
\end{equation}
and
\begin{equation}\label{equ:infiniteiteration}
    \bigcup_{i=0}^{\infty} \rho_\theta^i(J) \subset \Ima \Phi.
\end{equation}

    We now divide the remainder of the proof into two cases: when $\theta/2\pi$ is irrational and when $\theta/2\pi$ is rational.

    \textbf{Case 1}: Suppose $\theta/2\pi \in \mathbb{Q}^c$. By Lemma \ref{cor:fullcircle}, $\bigcup_{m=1}^{\infty} \rho_\theta^m(J)$ covers the entire circle $S^1$. Together with \eqref{equ:infiniteiteration}, this implies $\Ima \Phi = S^1$. However, this contradicts to the fact that $\mathcal{N}$ and $\mathcal{S}$ are not in $\Ima \Phi$. Hence, Case $1$ is impossible.

    \textbf{Case 2}: Now suppose $\theta/2\pi \in \mathbb{Q}$. We can write $\theta/2\pi = m/n$ for some $m \in \mathbb{Z}$, $n \in \mathbb{N}$, with $\gcd(m, n) = 1$.

For $n \geq 3$, denote by $\mathcal{N}_i = \rho_{-\theta}^i(\mathcal{N})$, $\mathcal{S}_i = \rho_{-\theta}^i(\mathcal{S})$, and $J_i = \rho_\theta^i(J)$ for $i = 0, \dots, n-1$. In this case, $S^1$ is partitioned into $2n$ segments by the $2n$ points $\mathcal{N}_0, \dots, \mathcal{N}_{n-1}, \mathcal{S}_0, \dots, \mathcal{S}_{n-1}$ if $n$ is odd, and into $n$ segments if $n$ is even, with some points coinciding.

Regardless of $n$ being odd or even, the intervals $J_0, \dots, J_{n-1}$ fall into $n$ different segments, as illustrated in Figure \ref{pic:illustration}.

\begin{figure}[h]
    \centering
    \includegraphics[width=0.85\linewidth]{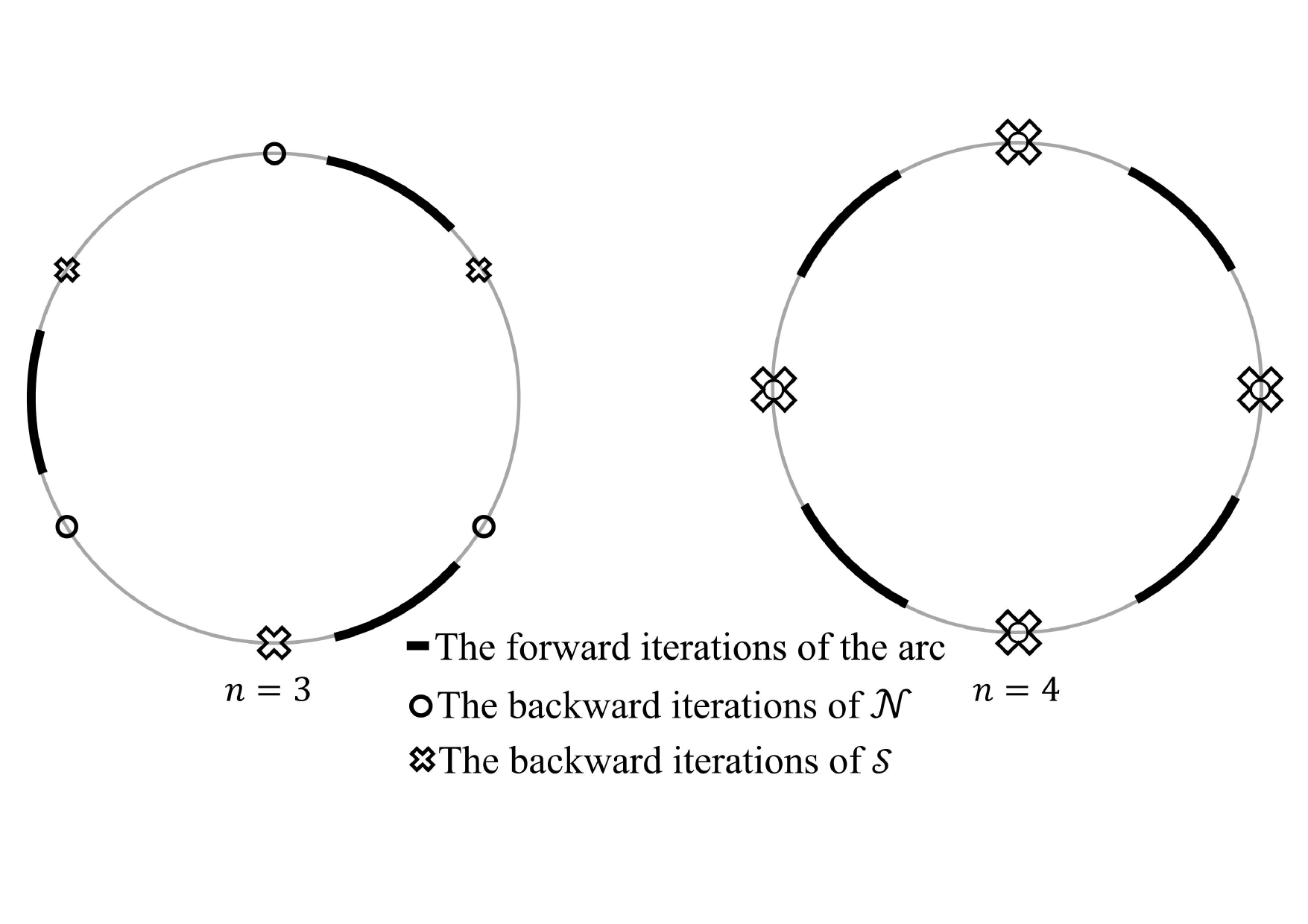}
    \caption{The iterations of $\mathcal{N},\mathcal{S}$ and $J$ on $S^1$}
    \label{pic:illustration}
\end{figure}

Observe that the complement of $\Ima \Phi$ in $S^1$ is backward invariant under the rotation $\rho_{\theta}$, i.e.,
\begin{equation}\label{equ:backward}
\rho_{-\theta}((\Ima \Phi)^c) \subset (\Ima \Phi)^c.
\end{equation}

Combining \eqref{completementofimage} with forward and backward invariance \eqref{equ:forward} and \eqref{equ:backward} imply that the points $\mathcal{N}_i$ and $\mathcal{S}_i$, for $i = 0, \dots, n-1$, are not contained in $\Ima \Phi$.

This fact together with \eqref{equ:finiteiteration} further implies that the number of connected components of $\Ima \Phi$ exceeds two, which is obviously a contradiction. Therefore, we conclude that $n \leq 2$.

When $n = 1$, we have $\theta = 2m\pi$, which means $\rho_\theta = \begin{pmatrix}1 & 0 \\ 0 & 1\end{pmatrix}$.

When $n = 2$, we have $\theta = m\pi$, which means $\rho_\theta = \begin{pmatrix}1 & 0 \\ 0 & 1\end{pmatrix}$ or $\begin{pmatrix}-1 & 0 \\ 0 & -1\end{pmatrix}$.


\end{proof}

\subsection{Lipschitz Continuity}\label{sec:vitalicoverargument}


This subsection aims to demonstrate that the underlying function of a self-similar graph is Lipschitz, as stated in Proposition \ref{prop:Vitalicovering}. To achieve this, we initially frame $G$ within a minimal rectangle $R$. We then leverage the self-similarity property of $G$ to generate a set of smaller, similar rectangles, each with widths less than a specified threshold $\delta$. By projecting these rectangles onto the x-axis, we identify a minimal cover for a given closed interval. The function's oscillation across the projected rectangles is governed by the rectangles' heights, ensuring the Lipschitz continuity of the underlying function.

\begin{prop}\label{prop:Vitalicovering}
Suppose $f:[0,1]\to \R$ is continuous. If the graph $G$ of $f$ on $[0,1]$ is self-similar, then $f$ is a Lipschitz function.
\end{prop}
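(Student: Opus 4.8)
The plan is to promote the self-similarity of $G$ into a uniform, scale-invariant bound on the oscillation of $f$, and then to globalize that bound by an efficient covering argument. First I would record the consequence of Proposition \ref{prop:rigidrotation}: since $S_i(G)\subseteq G$ for each $i$, every $\rho_{\theta_i}$ equals $\pm I$, and as $\{I,-I\}$ is closed under composition, every composite map $S_\alpha$ with $\alpha\in[k]^n$ has the diagonal form $S_\alpha(v)=\varepsilon_\alpha r_\alpha v+b_\alpha$ with $\varepsilon_\alpha\in\{+1,-1\}$. In particular $S_\alpha$ sends vertical lines to vertical lines and maps the axis-parallel bounding box $R=[0,1]\times[\min f,\max f]$ to an axis-parallel rectangle $R_\alpha:=S_\alpha(R)$ of width $r_\alpha$ and height $r_\alpha H$, where $H:=\max f-\min f$. (If $G$ is a straight line the claim is trivial, so I assume $H>0$.)

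The structural heart is the following observation, which I would isolate next. Write $I_\alpha$ for the projection of $S_\alpha(G)$ onto the $x$-axis, a closed interval of length $r_\alpha$. Because $S_\alpha$ has diagonal linear part, $S_\alpha(G)$ is itself the graph of a function over $I_\alpha$; since $S_\alpha(G)\subseteq G$ and $G$ is a graph, this forces $S_\alpha(G)=\{(x,f(x)):x\in I_\alpha\}$, i.e.\ $S_\alpha(G)$ is exactly the portion of $G$ lying over $I_\alpha$. As $S_\alpha(G)\subseteq R_\alpha$, the oscillation of $f$ over $I_\alpha$ is at most the height of $R_\alpha$:
\begin{equation*}
\sup_{I_\alpha} f-\inf_{I_\alpha} f\ \le\ r_\alpha H\ =\ H\,|I_\alpha|.
\end{equation*}
Thus on every cylinder interval $I_\alpha$, $f$ obeys a slope bound with the single constant $H$, and this holds at all scales simultaneously.

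I would then build a scale-adapted cover. For $\delta\in(0,1]$ let $\mathcal A_\delta$ be the antichain of words $\alpha$ minimal with $r_\alpha<\delta$; since all $r_i<1$, this is a finite cutset, so $G=\bigcup_{\alpha\in\mathcal A_\delta}S_\alpha(G)$, and projecting gives $[0,1]=\bigcup_{\alpha\in\mathcal A_\delta} I_\alpha$, a cover by closed intervals each of length $r_\alpha<\delta$. To prove the Lipschitz bound, fix $x<x'$ in $[0,1]$, set $\ell=x'-x$ and $\delta=\ell$, and extract from $\{I_\alpha\}_{\alpha\in\mathcal A_\delta}$ a minimal subcover $I_{\alpha_1},\dots,I_{\alpha_N}$ of $[x,x']$. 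Minimality forces overlap multiplicity at most two and confines each chosen interval to $(x-\delta,x'+\delta)$, so $\sum_{j}|I_{\alpha_j}|\le 2(\ell+2\delta)=6\ell$. Choosing $x=t_0\le t_1\le\cdots\le t_N=x'$ with $t_{j-1},t_j\in I_{\alpha_j}$ and telescoping through the overlaps,
\begin{equation*}
|f(x')-f(x)|\ \le\ \sum_{j=1}^N |f(t_j)-f(t_{j-1})|\ \le\ H\sum_{j=1}^N |I_{\alpha_j}|\ \le\ 6H\,\ell,
\end{equation*}
so $f$ is Lipschitz with constant $6(\max f-\min f)$.

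The two places needing care are exactly the highlighted steps. The main conceptual point is the identification $S_\alpha(G)=G\cap(I_\alpha\times\R)$, which converts self-similarity into a scale-free slope bound; it relies crucially on $\rho_{\theta_i}\in\{\pm I\}$, so that the pieces of $G$ remain honest graphs. The main technical point is the covering estimate: a fixed iteration depth $n$ would produce intervals of widely varying widths with an overhang I cannot control uniformly, whereas the cutset $\mathcal A_\delta$ makes every selected interval comparable to $\delta=\ell$ and minimality of the subcover keeps the total length proportional to $\ell$. This proportionality is precisely what yields a single Lipschitz constant independent of $x,x'$.
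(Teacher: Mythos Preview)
Your proof is correct and follows essentially the same route as the paper's: use Proposition~\ref{prop:rigidrotation} to see that each $S_\alpha$ is diagonal, deduce that $S_\alpha(G)$ is the graph of $f$ over $I_\alpha$ with oscillation $r_\alpha H$, then run a minimal-subcover argument to telescope. The only cosmetic differences are that the paper uses a fixed depth $n_0$ with $r_{\max}^{n_0}\le\delta$ rather than a cutset $\mathcal A_\delta$, and bounds the total covering length by $4\delta$ (via the same $I_j\cap I_{j+2}=\emptyset$ observation, split into odd and even indices) rather than your $6\ell$; in particular your closing remark that the fixed-depth approach ``cannot control the overhang uniformly'' is unfounded, since all intervals at depth $n_0$ already have length at most $\delta$ and that alone suffices.
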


Before we proceed with the proof, let's introduce some notations. Given a bounded closed interval $I$ and a function $f:I \to \R$, the \emph{oscillation} of $f$ on $I$, is denoted by
\begin{equation}\label{equ:defofoscillation}
    \omega_f(I)\coloneqq \sup\limits_{x,y\in I }|f(x)-f(y)|.
\end{equation}

We say the graph of a continuous function $f$ on a compact interval $I$ is \emph{framed} by a rectangle $R=[x_1,x_2]\times[y_1,y_2]$, if $I=[x_1,x_2]$ and $y_1\coloneqq\min\limits_{x\in I}f(x),~y_2\coloneqq\max\limits_{x\in I}f(x)$. By continuity of $f$ and compactness of $I$, the height of $R$ equals to the oscillation of $f$ on $I$.




\begin{proof}[Proof of Proposition \ref{prop:Vitalicovering}]
Since $f$ is continuous on $[0,1]$, $\omega_f < \infty$.

We aim to show that
\begin{claim}\label{claim:lip}
For every $\delta>0$,
\begin{equation}\label{equ:lip}
|f(x) - f(y)| \leq 4\omega_f\cdot|x-y|,
\end{equation}
for every $x, y \in [0,1]$ with $|x - y| = \delta$.
\end{claim}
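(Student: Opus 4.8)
The plan is to cash in the rigidity already obtained in Proposition \ref{prop:rigidrotation}. Since each generator satisfies $S_i(G)\subset G$, the proposition applies to every $S_i$ and forces its rotation to be $\pm I$; hence every composition $S_\alpha$ also has rotation $\pm I$, so $S_\alpha$ maps vertical lines to vertical lines and axis-parallel rectangles to axis-parallel rectangles. Framing $G$ by $R=[0,1]\times[\min f,\max f]$, whose height is $\omega_f:=\omega_f([0,1])<\infty$, I would first record that for every word $\alpha$ the image $S_\alpha(R)$ is again an axis-parallel rectangle of width $r_\alpha$ and height $r_\alpha\omega_f$, and that $S_\alpha(G)$ is the graph of a continuous function over the projection $I_\alpha:=\pi_x(S_\alpha(R))$, an interval of length $r_\alpha$.

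The key structural step is to identify these pieces with pieces of $G$ itself. Because $S_\alpha(G)\subset G$ and $G$ is a graph, while $S_\alpha(G)$ is a graph over all of $I_\alpha$ (as $S_\alpha$ maps the $x$-range $[0,1]$ affinely onto $I_\alpha$), the two graphs must agree over $I_\alpha$: $f|_{I_\alpha}$ is, up to reflection, an exact rescaling by the factor $r_\alpha$ of $f|_{[0,1]}$. Consequently the oscillation scales exactly,
\begin{equation*}
\omega_f(I_\alpha)=r_\alpha\,\omega_f=|I_\alpha|\,\omega_f ,
\end{equation*}
so every such sub-interval carries the same oscillation density $\omega_f$. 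This identity is what converts the self-similarity into a quantitative modulus-of-continuity statement.

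Now fix $\delta>0$ and $x<y$ in $[0,1]$ with $y-x=\delta$. Iterating the decomposition gives $G=\bigcup_{|\alpha|=n}S_\alpha(G)$, and projecting to the $x$-axis yields $\bigcup_{|\alpha|=n}I_\alpha=[0,1]$. Choosing $n$ so large that $r_{\max}^n<\delta/2$, where $r_{\max}=\max_i r_i<1$, makes every $I_\alpha$ with $|\alpha|=n$ shorter than $\delta/2$. These finitely many intervals cover $[x,y]$; I would pass to a minimal subcover, order it by left endpoints into a chain $I_{\alpha_0},\dots,I_{\alpha_m}$ with $x\in I_{\alpha_0}$, $y\in I_{\alpha_m}$ and consecutive intervals overlapping, and pick points $t_0=x$, $t_{m+1}=y$, and $t_j\in I_{\alpha_{j-1}}\cap I_{\alpha_j}$. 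The triangle inequality together with the oscillation identity then gives
\begin{equation*}
|f(x)-f(y)|\le\sum_{j=0}^{m}|f(t_j)-f(t_{j+1})|\le\sum_{j=0}^{m}\omega_f(I_{\alpha_j})=\omega_f\sum_{j=0}^{m}|I_{\alpha_j}| .
\end{equation*}
To finish, minimality of the subcover forces consecutive-but-one intervals to be disjoint, so each point lies in at most two of the $I_{\alpha_j}$ and $\sum_j|I_{\alpha_j}|\le 2\bigl|\bigcup_j I_{\alpha_j}\bigr|$; since every interval meets $[x,y]$ and is shorter than $\delta/2$, the union lies in the $\tfrac{\delta}{2}$-neighbourhood of $[x,y]$, an interval of length $<2\delta$. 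Hence $\sum_j|I_{\alpha_j}|<4\delta$ and $|f(x)-f(y)|\le 4\omega_f|x-y|$, which is exactly \eqref{equ:lip}.

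The main obstacle is conceptual rather than computational: the entire argument rests on the rigidity $\rho_{\theta_i}=\pm I$, which is what keeps all rectangles axis-parallel and makes each $S_\alpha(G)$ a genuine graph over an $x$-interval, so that oscillation is the right quantity and projects cleanly; without it, $S_\alpha(G)$ could be a tilted arc and the projection scheme collapses. The two delicate points to get right are (i) justifying that $f|_{I_\alpha}$ is literally a rescaled copy of $f$ — here one uses that $G$ is a graph to exclude vertical overlaps between the pieces — and (ii) the covering bookkeeping, namely controlling both the multiplicity (at most two) and the end-overhang of a minimal subcover to extract the explicit constant. Everything else is routine; indeed, letting the mesh $r_{\max}^n\to0$ would sharpen the constant further, but $4$ is more than enough to conclude Lipschitz continuity.
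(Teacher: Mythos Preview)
Your proposal is correct and follows essentially the same route as the paper: invoke Proposition~\ref{prop:rigidrotation} to make all $S_\alpha(R)$ axis-parallel, deduce the exact scaling $\omega_f(I_\alpha)=|I_\alpha|\,\omega_f$, cover $[x,y]$ by level-$n$ cylinder intervals of mesh smaller than $\delta$, pass to a minimal subcover, and telescope through overlap points. The only cosmetic difference is in the bookkeeping for $\sum_j|I_{\alpha_j}|\le 4\delta$: the paper splits the chain into odd and even indices (each family disjoint and inside $[x,y]$, contributing $\le\delta$ apiece) and then adds $|I_1|,|I_m|\le\delta$ for the two end intervals, whereas you phrase the same disjointness as ``multiplicity at most two'' and bound the union by the $\delta/2$-neighbourhood of $[x,y]$; both are equivalent packagings of the fact that $I_j\cap I_{j+2}=\emptyset$ in a minimal cover.
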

Since $\delta,x,y$ are arbitrarily chosen, \eqref{equ:lip} directly yields that $f$ is a $4\omega_{f}$-Lipschitz function, and thus will complete the proof of Proposition \ref{prop:Vitalicovering}.

In the rest of the proof, we will complete the proof of Claim \ref{claim:lip}.
By exchanging $x$ and $y$ if necessary, we can assume that $x<y$. Meanwhile, by the hypothesis, the graph $G$ of $f$ is self-similar, so recall the notations in Section \ref{sec:preliminaries}, we have $G=\bigcup_{i=1}^{k}S_{i}(G)$ for some constant $k\in\mathbb{N}$. Each $S_{i}(v)=r_{i}\rho_{\theta_{i}}(v)+b_{i}$, with $r_{i}\in(0,1), b_{i}\in\mathbb{R}^{2}$ and $\rho_{\theta_{i}}$ is a rotation matrix on $\mathbb{R}^{2}$, for every $i\in[k]$.

Let $R$ be the rectangle framing $f$ on $[0,1]$. For any $n\in \N$ and $\alpha\in [k]^n$. Denote by
\begin{equation*}\label{def:definitionofprojection}
    I_\alpha\coloneqq \pi\circ S_\alpha (R),
\end{equation*}
where $\pi$ is the projection given by $\pi(x_1,x_2):=x_1$.

For each $i\in[k]$, due to $S_i(G)\subset G$, it follows from Proposition \ref{prop:rigidrotation} that $\rho_{\theta_i}=\begin{pmatrix}1&0\\0&1 \end{pmatrix}$ or $\begin{pmatrix}-1&0\\0&-1 \end{pmatrix}$ . Moreover, $S_i(G)$ is the graph of $f$ on $I_i$ and the rectangle $S_i(R)$ frames the graph of $f$ on $I_i$, see Figure \ref{fig:frame}.
\begin{figure}[h]
    \centering
    \includegraphics[width=0.8\linewidth]{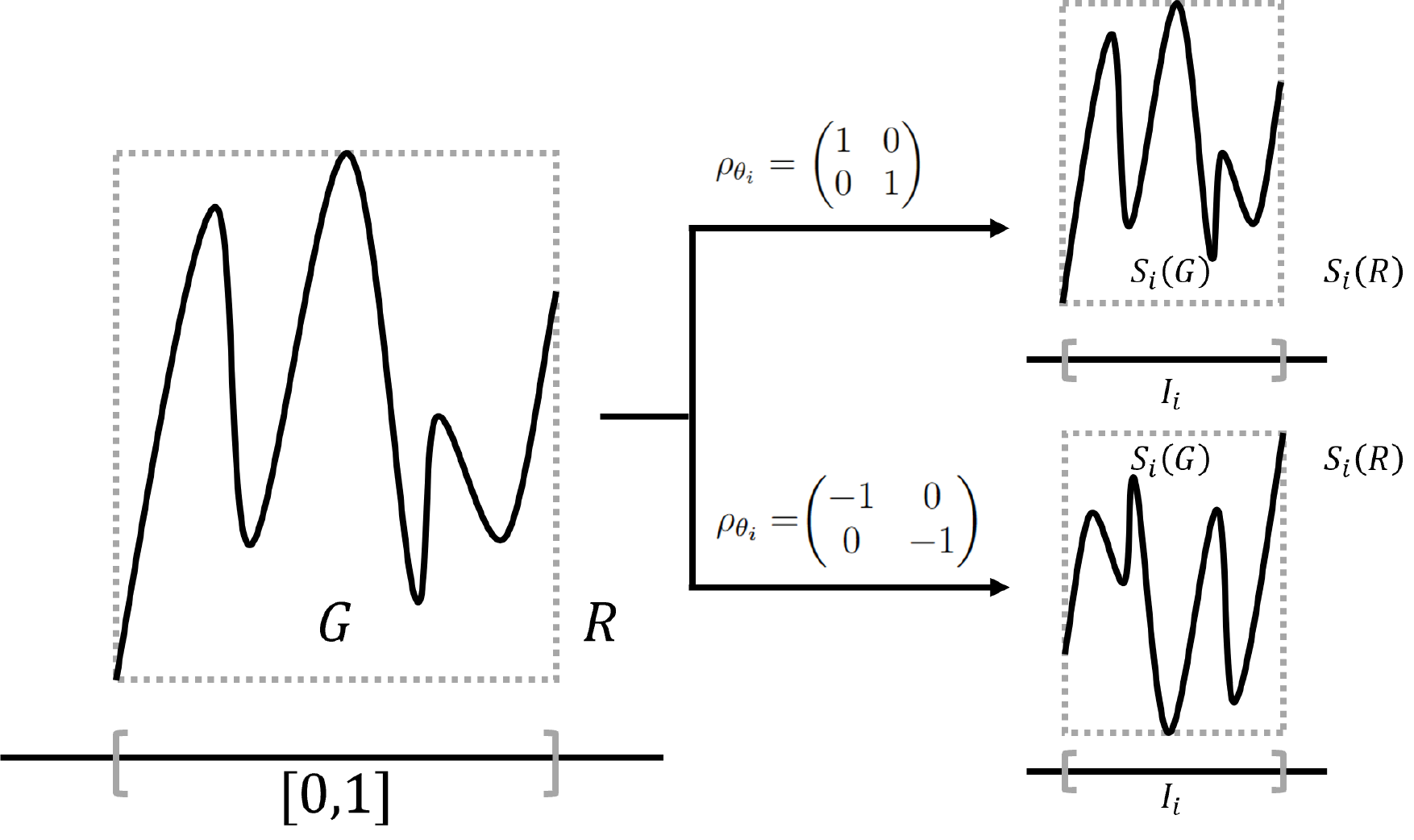}
    \caption{$S_i(R)$ frames $f$ on $I_i$}
    \label{fig:frame}
\end{figure}

Obviously, $S_i(R)$ is a rectangle with width $|I_i|=r_i$ and height $\omega_f(I_i)=r_i\cdot \omega_f$. Hence,
$$
\omega_f(I_i)=|I_i|\cdot \omega_f, \text{ for }i\in[k].
$$

For every $n\in \mathbb{N}$ and $\alpha\in [k]^n$, we inductively repeat the above process for $n$-steps on each $I_{\alpha}$ and obtain that
\begin{enumerate}
    \item $S_\alpha(G)$ is the graph of $f$ on $I_\alpha$;
    \item $S_\alpha(R)$ frames the graph of $f$ on $I_\alpha$, where $S_\alpha(R)$ is a rectangle with width $|I_\alpha|=r_\alpha$ and height
    \begin{equation}\label{equ:fromintervaltooscillation}
    \omega_f(I_\alpha)=r_\alpha \cdot \omega_f.
\end{equation}
\end{enumerate}

On the other hand, since $r_1,\dots, r_k\in (0,1)$, we have $r_{\max}\coloneqq\max\{r_1,\dots, r_k\}\in(0,1)$. Thus, for every $\delta>0$ given in Claim \ref{claim:lip}, we can choose a $n_{0}:=n_{0}(\delta)\in \N$ with $r_{\max}^{n_0}\leq \delta$. This implies that for any $\alpha\in[k]^{n_0}$, we have
\begin{equation}\label{equ:lengthofeachinterval}
    |I_\alpha|=r_\alpha \leq r_{\max}^{n_0}\leq \delta.
\end{equation}

The self-similarity of $G$ indicates that
$$G=\bigcup\limits_{i=1}^{k}S_i(G)=\bigcup\limits_{\alpha\in[k]^{n_0}}S_\alpha(G).$$ Hence $G$ is covered by $\{S_\alpha(R)\}_{\alpha\in[k]^{n_0}}$. Consequently, the interval $[0,1]$ is covered by a collection of compact intervals $\{I_\alpha\}_{\alpha\in[k]^{n_0}}$.

Due to the finiteness of the indices set $[k]^{n_0}$, we can select a finite indices subset $\Lambda \subset [k]^{n_0}$ that satisfies the following two conditions:

\begin{enumerate}
    \item $\{I_\alpha\}_{\alpha\in\Lambda}$ forms a cover for $[x,y]$.
    \item $\{I_\alpha\}_{\alpha\in\Lambda}$ is minimal, in the sense that if any $\alpha_0\in \Lambda$ is removed, then $\{I_\alpha\}_{\alpha\in\Lambda\backslash \{\alpha_0\}}$ no longer forms a cover for $[x,y]$.
\end{enumerate}


We arrange the set $\{I_{\alpha}\}_{\alpha\in\Lambda}$ into an ordered sequence ${I_1, \dots, I_m}$ based on the left endpoints, from left to right. According to Condition (2) in our construction, for every indices $j\in[m-2]$, the intervals $I_j$ and $I_{j+2}$ are disjoint, i.e., $I_j \cap I_{j+2} = \emptyset$. To see this, assume for the sake of contradiction that there is an index $j_0$ in $[m-2]$ such that the intersection $I_{j_0} \cap I_{j_0+2}$ is non-empty. Given our ordering, this would suggest that $I_{j_0+1}$ is entirely contained within the union of $I_{j_0}$ and $I_{j_0+2}$. This implies that the set ${I_1, \dots, I_m}$ excluding ${I_{j_0+1}}$ would still provide a cover for the interval $[x,y]$, contradicting Condition (2) as defined in $\Lambda$.


Notice that the intervals $I_j$, where $j\in[m]\backslash\{1,m\}$ and $j$ is odd, are mutually disjoint and their union is contained in $[x,y]$. Therefore,

$$\sum\limits_{\substack{j\in[m]\backslash\{1,m\}\\j \text{ odd}}} |I_j| \leq |[x,y]|=\delta.$$

Similarly, we also have

$$\sum\limits_{\substack{j\in[m]\backslash\{1,m\}\\j \text{ even}}} |I_j| \leq \delta.$$

On the other hand, due to \eqref{equ:lengthofeachinterval}, $|I_1|,|I_m|<\delta$. Hence the total length
\begin{equation}\label{equ:totallength}
    \sum\limits_{j=1}^{m} |I_j| \leq 4\delta.
\end{equation}

Finally, we arbitrarily choose points $x_i\in I_i\cap I_{i+1},i\in[m-1]$. Consider,
$$\begin{aligned}
    |f(x)-f(y)|&\leq |f(x)-f(x_1)|+\sum\limits_{i=1}^{m-2}|f(x_i)-f(x_{i+1})|\\
    &+|f(x_{m})-f(y)|&\text{(by triangle inequality)}\\
    &\leq \omega_f(I_1)+\sum\limits_{i=1}^{m-2}\omega_f(I_{i+1})+\omega_f(I_m)
    &\text{(by \eqref{equ:defofoscillation})}\\
    &=|I_1|\cdot \omega_f+\sum\limits_{i=1}^{m-2}|I_i|\cdot \omega_f+|I_m|\cdot \omega_f
    &\text{(by \eqref{equ:fromintervaltooscillation})}\\
    &\leq 4\delta \omega_f.&\text{(by \eqref{equ:totallength})}
\end{aligned}$$
Therefore, we complete the proof of Claim \ref{claim:lip}, thus obtaining    Proposition \ref{prop:Vitalicovering}.
\end{proof}


\subsection{Proof of Theorem \ref{thm:maintheormrigidity}}\label{sec:bandtkravchenko'sresult}

The aim of this subsection is to show that the underlying function of a self-similar graph is affine.

To prove this, we demonstrate that any interval $[a,b]$ within $[0,1]$ contains a subinterval $[s,t]$ of comparable length to $[a,b]$ satisfying the condition $\frac{f(t)-f(s)}{t-s}=f(1)-f(0)$ (see Proposition \ref{lem:attacher}). By repeatedly applying Proposition \ref{lem:attacher}, we can construct a Cantor-like subset within $[a,b]$. This construction, combined with the fact that $f$ satisfies Lipschitz continuity (see Proposition \ref{prop:Vitalicovering}), allows us to deduce that $f$ is affine.

\begin{prop}\label{lem:attacher}
    Let $G$ be the graph of a continuous function $f$ on $[0,1]$. If $G$ is self-similar, then there exists $c\in (0,1/2)$ such that for any closed interval $[a,b]\subset [0,1]$, there exists a closed subinterval $[s,t]\subset [a,b]$ satisfying
    \begin{enumerate}
        \item $\frac{a+b}{2}\in [s,t]$;
        \item $c(b-a)\leq t-s\leq \frac{1}{2} (b-a)$;
        \item $\frac{f(t)-f(s)}{t-s}=f(1)-f(0):=\lambda$.
    \end{enumerate}
\end{prop}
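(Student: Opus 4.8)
The plan is to realize $[s,t]$ as one of the projected sub-intervals $I_\alpha=\pi\circ S_\alpha(R)$ ($\alpha\in[k]^n$) already used in the proof of Proposition~\ref{prop:Vitalicovering}, exploiting the fact that \emph{every} such interval automatically has secant slope $\lambda$. Indeed, by Proposition~\ref{prop:rigidrotation} each $\rho_{\theta_i}\in\{I,-I\}$, so $\rho_{\theta_\alpha}=\pm I$ for every word $\alpha$, and $\pi\circ S_\alpha$ restricts to the $x$-axis as an affine map $x\mapsto \pm r_\alpha x+c_\alpha$ sending $\{0,1\}$ onto the two endpoints $s_\alpha<t_\alpha$ of $I_\alpha$. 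Since $S_\alpha(G)$ is again a graph, $S_\alpha$ sends $(0,f(0))$ and $(1,f(1))$ to $(s_\alpha,f(s_\alpha))$ and $(t_\alpha,f(t_\alpha))$ in one order or the other; writing $S_\alpha(x,y)=\pm r_\alpha(x,y)+b_\alpha$, a one-line computation gives
\[
\frac{f(t_\alpha)-f(s_\alpha)}{t_\alpha-s_\alpha}=\frac{\pm r_\alpha\big(f(1)-f(0)\big)}{\pm r_\alpha}=f(1)-f(0)=\lambda,
\]
independently of the orientation. Hence condition (3) holds for any choice $[s,t]=I_\alpha$, and the whole problem reduces to locating a word $\alpha$ with $I_\alpha$ of the correct size and position.

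To this end set $\mu=\tfrac{a+b}{2}$, $L=\tfrac{b-a}{2}$, and $r_{\min}=\min_i r_i$, $r_{\max}=\max_i r_i\in(0,1)$. Projecting the identity $S_\alpha(G)=\bigcup_{j}S_\alpha(S_j(G))$ to the $x$-axis shows that each $I_\alpha$ is the union of the sub-intervals obtained from its one-letter extensions, so the level-$n$ covers of $[0,1]$ refine to level-$(n+1)$ ones, with each refinement step multiplying lengths by a factor in $[r_{\min},r_{\max}]$. Starting from $[0,1]$ and repeatedly passing to a child interval that contains $\mu$, I obtain a strictly decreasing nested chain of intervals all containing $\mu$, whose lengths tend to $0$; let $I_\alpha$ be the first interval in this chain with $|I_\alpha|\le L$ (such exists since $|[0,1]|=1>L$). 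Its parent has length $>L$, so $|I_\alpha|\ge r_{\min}\cdot(\text{parent length})>r_{\min}L$, giving $r_{\min}L<|I_\alpha|\le L$.

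Now put $[s,t]=I_\alpha$ and $c=r_{\min}/2\in(0,1/2)$. Then $c(b-a)=r_{\min}L<t-s\le L=\tfrac12(b-a)$, which is condition (2), while condition (3) was established above. Finally, since $s\le\mu\le t$ and $t-s\le L$, we get $s\ge \mu-(t-s)\ge \mu-L=a$ and $t\le \mu+(t-s)\le \mu+L=b$, so $[s,t]\subset[a,b]$ and in particular $\mu\in[s,t]$, which is condition (1). The constant $c=r_{\min}/2$ depends only on the defining similitudes and not on $[a,b]$, as required. The one place where the hypotheses genuinely enter is the slope identity in the first paragraph: it relies decisively on Proposition~\ref{prop:rigidrotation}, since a genuine rotation would not send a graph to a graph and no secant slope would be preserved, whereas the scale-selection is a routine refinement argument; the possible overlapping of the cover $\{I_\alpha\}$ is harmless, because only the existence of a child containing $\mu$ is used, not a partition.
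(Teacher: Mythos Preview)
Your proof is correct and follows essentially the same route as the paper: you set $c=r_{\min}/2$, select $[s,t]=I_\alpha$ for a word $\alpha$ chosen so that $I_\alpha$ contains the midpoint and has length in $(r_{\min}L,L]$, and verify the slope identity via Proposition~\ref{prop:rigidrotation}. Your explicit construction of a nested descending chain of intervals containing $\mu$ is in fact slightly cleaner than the paper's phrasing, which appeals to ``the smallest integer property of $n^*$'' for the truncated word $\alpha'$ without explicitly arranging that the intervals are nested.
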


\begin{proof}
   By hypothesis $G$ is self-similar, so we adhere to the notations $k,S_{i},r_{i}$ in the proof of Proposition \ref{prop:Vitalicovering}. Let $r_{\min}\coloneqq \min\{r_1,\dots,r_k\}>0$, and take
   \begin{equation*}
   c\coloneqq \frac{r_{\min}}{2}>0.
   \end{equation*}
   Let's fix an interval $[a,b]\subset[0,1]$. Due to the self-similarity of $G$ again, for any integer $n\in\N$, there exists $\alpha\in [k]^n$ such that $I_\alpha$ contains the middle point $\frac{a+b}{2}$.
    Let $n^{*}\in\mathbb{N}$ be the smallest integer, such that $|I_\alpha|\leq \frac{b-a}{2}$ .
    We will show that
    \begin{equation*}
    I_{\alpha}:=[s,t]
    \end{equation*}
    is the desired subinterval satisfying three properties mentioned above.

Let us verify the properties item by item. Property (1) and the second inequity in Property (2) are immediate from the construction of $I_{\alpha}$.

For the first inequity in Property (2), we write $\alpha=(\alpha_1,\dots,\alpha_{n^{*}})\in [k]^{n^{*}}$ and denote by $\alpha'\coloneqq (\alpha_1,\dots,\alpha_{n^{*}-1})\in [k]^{n^{*}-1}$. Due to  the smallest integer property of $n^{*}$, it follows that $|I_{\alpha'}|>\frac{b-a}{2}$. Thus, $|I_\alpha|=r_{\alpha_{n^{*}}}\cdot |I_{\alpha'}|>r_{\alpha_{n^{*}}}\cdot\frac{b-a}{2}$, which yields $$
|I_\alpha|\geq r_{\min}\cdot\frac{b-a}{2}=c(b-a).
$$

For Property (3), let us denote $\overrightarrow{x} \coloneqq (x, f(x)) \in G$ for $x \in [0,1]$. Let us first prove the following claim.

\begin{claim}\label{claimfinal}
    For every  $i\in[k]$, if $\{\overrightarrow{a},\overrightarrow{b}\}=\{S_i(\overrightarrow{c}),S_i(\overrightarrow{d})\}$, then
    $$
    \frac{f(b)-f(a)}{b-a}=\frac{f(d)-f(c)}{d-c}.
    $$
\end{claim}


Since the rotation associated with any similitude $S_i$ must be either the identity or a reflection, $(x, f(x))$ is mapped by $S_i$ to either $(r_i x + b_i, r_i f(x) + b_i)$ or $(-r_i x + b_i, -r_i f(x) + b_i)$. In both cases, the slope remains invariant. This concludes the proof of Claim \ref{claimfinal}.

Finally, since $\{\overrightarrow{s}, \overrightarrow{t}\} = \{S_\alpha(\overrightarrow{0}), S_\alpha(\overrightarrow{1})\}$, we apply Claim \ref{claimfinal} $n^*$ times and obtain that
$$
\frac{f(t) - f(s)}{t - s} = \frac{f(1) - f(0)}{1 - 0} = \lambda,
$$
as desired. This concludes our proof of Proposition \ref{lem:attacher}.
\end{proof}




Finally, we are ready to prove Theorem \ref{thm:maintheormrigidity}.
\begin{proof}[Proof of Theorem \ref{thm:maintheormrigidity}]
    $``\Rightarrow":$ To show $f$ is affine, it suffices to show that for any interval $[a,b]\subset [0,1]$, we have $\frac{f(b)-f(a)}{b-a}=\lambda$.

Given an arbitrary interval $[a,b]$, we construct a Cantor-like subset of $[a,b]$ inductively in the following way.

For initial step, there exists a subinterval of $[a,b]$, say $[a_1^1,b_1^1]$, satisfying three properties in Proposition \ref{lem:attacher}. We define $C_1\coloneqq[a,b]\backslash(a_1,b_1)$. This set is non-empty due to the first two properties in Proposition \ref{lem:attacher}.

Suppose we have defined $C_n$. Notice there are $2^n$ intervals in $C_n$. For stage $n+1$, we apply Proposition \ref{lem:attacher} on each of these $2^n$ intervals, obtaining subintervals $[a^n_1,b^n_1],\dots, [a_{2^n}^n,b_{2^n}^n]\subset C_n$. We then define $C_{n+1}\coloneqq C_n\backslash\bigcup\limits_{i=1}^{2^n}(a_i^n,b_i^n)$. This set is also non-empty by the first two properties in Proposition \ref{lem:attacher}.


\begin{figure}[h]
    \centering
    \includegraphics[width=0.95\linewidth]{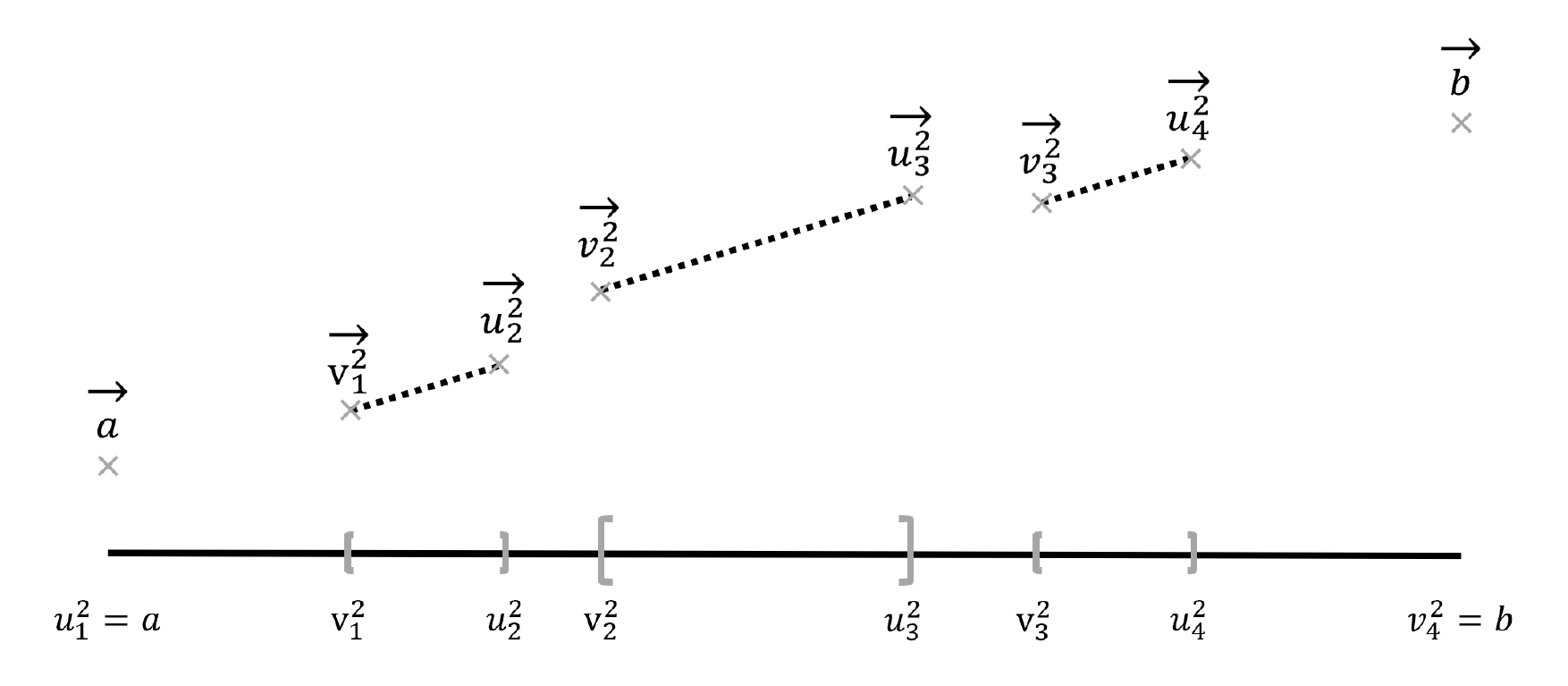}
    \caption{Illustration of the set $C_2$}
    \label{fig:stage2}
\end{figure}

Each $C_n$ contains $2^n$ intervals. We denote the right and left endpoints of the $i$-th interval as $u_i^n$ and $v_i^n$ respectively. Thus, the intervals in $C_n$ can be sequentially represented as $[u^n_1, v^n_1], [u^n_2, v^n_2], \dots, [u^n_{2^n}, v^n_{2^n}]$, as depicted in Figure \ref{fig:stage2}.

Next, we estimate the total length of $C_n$, $|C_n|=\sum\limits_{i=1}^{2^n}(v_i^n-u_i^n)$. By invoking Property (2) in Lemma \ref{lem:attacher}, we deduce that $|C_{n+1}|\leq (1-c)|C_n|$. Combined this with the fact that $|C_1|\leq (1-c)(b-a)$, it follows that
\begin{equation}\label{equ:totallengthatstagen}
 |C_n|\leq (1-c)^n(b-a).
\end{equation}

Meanwhile, by Proposition \ref{prop:Vitalicovering}, there is a constant $L=4\omega_{f}>0$, such that for any $n\in \N$, we have
\begin{equation}\label{equ:lipschitzinterval}
    |f(v_i^n)-f(u_i^n)|\leq L|v_i^n-u_i^n|\text{, for all }i\in[2^n].
\end{equation}

On the other hand, by Property (3) in Proposition \ref{lem:attacher}, it follows that
 \begin{equation}\label{equ:oscillationoneachinterval}
     f(u^n_{i+1})-f(v_i^n)=\lambda(u_{i+1}^n-v_i^n)\text{, for all }i\in[2^n-1].
 \end{equation}

Based on all the estimates, for every $n\in\N$, we have
 \begin{align*}
    &|f(b)-f(a)-\lambda(b-a)|\\
    =&|\sum\limits_{i=1}^{2^n}(f(v_i^n)-f(u_i^n))+\sum\limits_{i=1}^{2^n-1}(f(u_{i+1}^n)-f(v_i^n))-\lambda(b-a)|\\
    =&|\sum\limits_{i=1}^{2^n}(f(v_i^n)-f(u_i^n))+\sum\limits_{i=1}^{2^n-1}\lambda(u_{i+1}^n-v_i^n)-\lambda(b-a)|
    &\enskip&\text{(by \eqref{equ:oscillationoneachinterval})}\\
    \leq& \sum\limits_{i=1}^{2^n}|f(v_i^n)-f(u_i^n)|+|\lambda(b-a-|C_n|)-\lambda(b-a)|\\
    \leq& (L+|\lambda|)|C_n|
    &\enskip&\text{(by \eqref{equ:lipschitzinterval})}\\
    \leq& (L+|\lambda|)(1-c)^n(b-a).
    &\enskip&\text{(by \eqref{equ:totallengthatstagen})}
\end{align*}

Since $c= \frac{r_{\min}}{2}<1$ and $n$ is arbitrarily chosen, we conclude that $f(b)-f(a)-\lambda(b-a)=0$ for any interval $[a,b]\subset [0,1]$. Therefore, $f$ is an affine function.



    $``\Leftarrow":$ Conversely, suppose $f$ is an affine function. Similitudes $S_1(v) = \frac{1}{2}v + (0, \frac{f(0)}{2})$ and $S_2(v) = \frac{1}{2}v + (\frac{1}{2}, \frac{f(1)}{2})$ satisfies $G = S_1(G) \cup S_2(G)$. Hence, $G$ is self-similar.
\end{proof}

\begin{bibdiv}
\begin{biblist}

\bib{BandtGraf2010}{article}{
   author={Bandt, Christoph},
   author={Graf, Siegfried},
   title={Self-similar sets. VII. A characterization of self-similar fractals with positive Hausdorff measure},
   journal={Proc. Amer. Math. Soc.},
   volume={114},
   date={1992},
   number={4},
   pages={995–1001},
   }

\bib{BandtKravchenko2010}{article}{
   author={Bandt, Christoph},
   author={Kravchenko, Aleksey},
   title={Differentiability of fractal curves},
   journal={Nonlinearity},
   volume={24},
   date={2011},
   number={10},
   pages={2717--2728},
}

\bib{BandtMubarak2004}{article}{
   author={Bandt, Christoph},
   author={Mubarak, Mohamed},
   title={Distribution of distances and interior distances for certain self-similar measures},
   journal={Arab. J. Sci. Eng.},
   volume={29},
   date={2004},
   number={2C},
   pages={111--124},
   publisher={King Fahd University of Petroleum and Minerals},
}


\bib{Falconer1985}{book}{
   author={Falconer, Kenneth},
   title={The Geometry of Fractal Sets},
   series={Cambridge Tracts in Mathematics},
   volume={85},
   publisher={Cambridge University Press, Cambridge},
   date={1985},  
}



\bib{Hutchinson1981}{article}{
   author={Hutchinson, John Edward},
   title={Fractals and self-similarity},
   journal={Indiana Univ. Math. J.},
   volume={30},
   date={1981},
   number={5},
   pages={713--747},
}

\bib{Walters82}{book}{
   author={Walters, Peter},
   title={An Introduction to Ergodic Theory},
   series={Graduate Texts in Mathematics},
   volume={79},
   publisher={Springer-Verlag, New York},
   date={1982},
}

\bib{Teiji1901}{article}{
   author={Takagi, Teiji},
   title={A simple example of the continuous function without derivative},
   journal={Proc. Phys.-Math. Soc. Japan},
   volume={1},
   year={1903},
   pages={176--177},
}

\bib {XiJiang2023}{article}{
   author = {Xi, Jinghua},
   author = {Jiang, Kan},
  title = {When a surface is a self-similar set},
  year = {2023},
  note = {Preprint}
}

\end{biblist}
\end{bibdiv}

\end{document}